\newtheorem{thm}{Theorem}[section]
\newtheorem{lemma}[thm]{Lemma}
\newtheorem{prop}[thm]{Proposition}
\newtheorem{defin}[thm]{Definition}
\newtheorem{rem}[thm]{Remark}
\newtheorem{exam}[thm]{Example}
\newcommand{\R}{{\mathbb{R}}}
\newcommand{\N}{{\mathbb{N}}}
\newcommand{\C}{{\mathbb{C}}}
\newcommand{\cD}{{\mathcal{D}}}
\newcommand{\cC}{{\mathcal{C}}}
\newcommand{\cL}{{\mathcal{L}}}
\newcommand{\cN}{{\mathcal{N}}}
\newcommand{\cP}{{\mathcal{P}}}
\def\id{{1\hskip-2.5pt{\rm l}}}
\begin{document}

\title{Quantum unsharpness and symplectic rigidity}

\renewcommand{\thefootnote}{\alph{footnote}}

\author{\textsc Leonid
Polterovich }

\footnotetext[1]{ Partially supported by the National Science
Foundation grant DMS-1006610 and the Israel Science Foundation grant 509/07. }
\footnotetext[2]{{\it MSC classes:} 53D35,81S10,81P15  }
\footnotetext[3]{{\it Keywords:} symplectic manifold, Poisson bracket, symplectic quasi-state,
Berezin-Toeplitz quantization, positive operator valued measure (POVM), quantum measurement.}

\date{\today}

\maketitle

\begin{abstract}
\noindent We discuss a link between ``hard" symplectic topology and
an unsharpness principle for generalized quantum observables (positive operator valued measures).
The link is provided by the Berezin-Toeplitz quantization.
\end{abstract}

\bigskip

\section{Introduction}\label{sec-intro} According to a version of the correspondence principle, quantum mechanics contains the classical one as a limiting case. Mathematically, this is modeled by quantization schemes which associate  Hermitian operators acting on Hilbert spaces (quantum observables) to functions on symplectic manifolds (classical observables), see e.g. \cite{Ali-survey} for a survey. Starting from the 1980ies,
``hard" methods in symplectic topology such as Morse theory on loop spaces and pseudo-holomorphic curves (see e.g. \cite{MS}) gave rise to discovery of various surprising rigidity phenomena taking place on symplectic manifolds. In the present note we propose a link between these developments and the Berezin-Toeplitz quantization (the latter is briefly reviewed in Section \ref{sec-BT} below).

On the symplectic side, our starting point is (a quantitative version of) the following phenomenon \cite{EPZ} which
is described in more details in Sections \ref{sec-sympprelim} and \ref{sec-rigpu}:
{\it A partition of unity associated to a sufficiently fine finite open cover
of a closed symplectic manifold cannot consist of Poisson-commuting functions.}
It turns out that the translation of this statement into the quantum language by means of the Berezin-Toeplitz
quantization fits well the discussion on unsharp observables appearing in the physical literature \cite{Busch}.
Here the observables are represented by positive operator valued measures (POVMs), and their unsharpness can be conveniently measured in terms of the noise operator (see Sections \ref{sec-povms}-\ref{sec-smearing}).
POVMs appearing in our context are the images of the partitions of unity under the Berezin-Toeplitz quantization,
and the above-mentioned non-commutativity phenomenon translates into the fact that the corresponding POVMs are subject to a {\it systematic intrinsic noise}. The precise formulation of this result and its mechanical interpretation are given in Sections \ref{sec-mainth} and \ref{sec-interpret} respectively. The paper is concluded with some open problems proposed in Section \ref{sec-disc}.

\section{Symplectic preliminaries} \label{sec-sympprelim} We start with some preliminaries
on symplectic geometry and topology (see \cite{MS-Intro, P-book} for more details). The phase space of classical mechanics is modeled
by {\it a symplectic manifold}, that is by an even-dimensional manifold $M^{2n}$ equipped
with a closed differential $2$-form $\omega$ whose top power $\omega^n$ does not vanish
at any point of $M$. In particular, $\omega^n/n!$ is the {\it canonical volume form}
on $M$. By the Darboux theorem, near each point
of a symplectic manifold one can choose local coordinates $p_1,q_1,...,p_n,q_n$ so that in these coordinates
$\omega = \sum_{j=1}^n dp_j \wedge dq_j$. Below we focus on closed (that is compact without boundary)  connected symplectic manifolds. The basic examples include closed oriented surfaces equipped with the area form, the complex projective space $\C P^n$ with the Fubini-Study symplectic form
(that is the unique symplectic form which is invariant under the action of the unitary group $U(n+1)$ and which integrates to $1$ over the projective line $\C P^1 \subset \C P^n$) and their products.

A mechanical system is described by its energy, that is a Hamiltonian function $f \colon M \times \R \to \R$. According to a basic principle of classical mechanics, the energy determines the time evolution of the system, in the following way: Consider the Hamiltonian system on $M$ which in the Darboux coordinates is given
by
\begin{equation*}
\begin{cases}
 \dot{q}_i = \displaystyle \hspace{9.5pt} \frac{\partial{f}}{\partial{p_i}}(p,q,t)  \vspace{5pt}\\
\dot{p}_i = \displaystyle - \frac{\partial{f}}{\partial{q_i}}(p,q,t)
\end{cases}
\end{equation*}
It gives rise to a one-parameter family $\phi_t$ of diffeomorphisms of $M$ which
send the initial condition $z(0)$ to the solution $z(t)$ at time $t$.
Diffeomorphisms $\phi_t$ are called {\it Hamiltonian diffeomorphisms}. Hamiltonian diffeomorphisms
preserve the symplectic form $\omega$ and hence the phase volume $\omega^n/n!$.

\medskip

A subset $U \subset M$ is called {\it displaceable} if there exists a Hamiltonian diffeomorphism
$\phi$ of $M$ so that $\phi(U) \cap \text{Closure}(U) = \emptyset$. We say that a subset $X \subset M$ is {\it dominated} by an open subset $U \subset M$ if $\phi(X) \subset U$ for some Hamiltonian diffeomorphism $\phi$ of $(M,\omega)$.  Clearly, every subset dominated by a displaceable one is itself displaceable.

The property of being displaceable is sensitive to symplectic geometry and topology of the set $U$. As an illustration, consider the sphere $S^2$ equipped with the standard area form of the total area $1$. Every open disc with smooth boundary of the area $\leq 1/2$ can be mapped to the upper hemisphere by an area-preserving map and hence is displaceable by a rotation. On the other hand, the equator (that is a simple closed curve dividing the sphere into two discs of the area $1/2$) is non-displaceable. A fortiori, any annulus of arbitrary small area containing an equator is non-displaceable.

Obstructions to displaceability on higher dimensional symplectic manifolds are much more delicate. They belong to the realm of symplectic rigidity phenomena mentioned in the introduction, and their study form one of the central directions of modern symplectic topology. For instance, consider the complex projective space $\C P^n$ equipped with the Fubini-Study symplectic form.  Let $[z_0:...:z_n]$ be the homogeneous coordinates on $\C P^n$. The {\it Clifford torus} $\{|z_0|=...=|z_n|\}$ in $\C P^n$ (which can be considered as a generalization
of the equator in $S^2=\C P^2$) is non-displaceable. This was proved in \cite{BirEP} by the methods
discussed in Section \ref{sec-rigpu} below and in \cite{Cho} via Lagrangian Floer theory.

\medskip

Next, let us turn to the space $C^{\infty}(M)$ of smooth functions on a closed symplectic manifold
$(M,\omega)$. It is equipped with {\it the Poisson bracket} $\{f,g\}$, which in the Darboux coordinates $(p,q)$ is given by $$\{f,g\} = \frac{\partial f}{\partial q} \cdot \frac{\partial g}{\partial p}-\frac{\partial f}{\partial p} \cdot \frac{\partial g}{\partial q}\;.$$ We say that $f$ and $g$ {\it Poisson commute}
if their Poisson bracket vanishes: $\{f,g\}=0$. Finite-dimensional linear subspaces of $C^{\infty}(M)$
consisting of Poisson commuting functions naturally arise in the theory of integrable Hamiltonian systems and
of Hamiltonian tori actions on symplectic manifolds.

 Write $||f||$ for the uniform norm $\max_{M} |f|$ of a function $f \in C^{\infty}(M)$. Consider the functional \begin{equation}\label{eq-Phi-def}
\Phi: C^{\infty}(M) \times C^{\infty}(M) \to \R,\;\; \Phi(f,g):=||\{f,g\}||\;.
 \end{equation}
The functional $\Phi$ is known to exhibit
various rigidity-type properties \cite{EP,EPZ,Buh,BEP}. I am focusing below on one of them known
as {\it rigidity of partitions of unity} \cite{EPZ} and in particular on its translation into the language
of quantum mechanics.

\section{Rigidity of partitions of unity} \label{sec-rigpu}
Let $M$ be a closed manifold. A {\it partition of unity} $\{f_j\}$, $j=1,...,N$
is a collection of non-negative smooth functions  $\{f_j\}$, $j=1,...,N$ on $M$ which sums up to $1$:
$$f_1 +...+f_N =1\;.$$ We say that a partition of unity  $\{f_j\}$ is subordinated to an
open cover $\{U_j\}$,  $j=1,...,N$ of $M$ if $\text{supp}(f_j) \subset U_j$. Here
$\text{supp}(f)$ stands for the support of the function $f$, that is for the closure of the set $\{f \neq 0\}$.

\medskip
Suppose now that $M$ is equipped with a symplectic form $\omega$.
Let $\{f_j\}$, $j=1,...,N$ be a partition of unity on $M$. It is called {\it Poisson commutative}
if the Poisson brackets $\{f_i,f_j\}$ vanish for all $i,j$, and {\it Poisson non-commutative} otherwise. Put
\begin{equation}\label{eq-nu-class}
\nu_c(\{f_j\}) = \max \Phi\Big{(}\sum x_j f_j,\sum y_k f_k\Big{)}\;,
\end{equation}
where the maximum is taken over all vectors $x=(x_1,...,x_N)$ and $y = (y_1,...,y_N)$ from
the cube $K_N := [-1,1]^N$, and $\Phi$ is given by \eqref{eq-Phi-def}. The quantity $\nu_c$ measures classical (hence the subindex $c$) Poisson non-commutativity of the partition $\{f_j\}$.

\medskip
\noindent
\begin{thm} \label{thm-partunity} (cf. \cite{EPZ}). For every displaceable open subset
$U \subset M$ there exists a positive constant
$C$ so that
\begin{equation}
\label{eq-partunity}
\nu_c(\{f_j\}) \geq \frac{C}{N^2}
\end{equation}
for any partition of unity $\{f_j\}$, $j=1,...,N$ subordinated
to a cover of $M$ by $U$-dominated open subsets.
\end{thm}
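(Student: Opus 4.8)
The plan is to deduce the estimate from the existence of a \emph{partial symplectic quasi-state} $\zeta \colon C^{\infty}(M) \to \R$ on the closed symplectic manifold $(M,\omega)$, built from the spectral invariant of the unit in quantum homology (following Entov--Polterovich, cf. \cite{EP,EPZ}). I would use the following properties of $\zeta$: it is monotone and normalized, $\zeta(1)=1$; it vanishes on functions with displaceable support, i.e. $\zeta(h)=0$ whenever $\text{supp}(h)$ is displaceable (in particular $\zeta(0)=0$); and, crucially, it satisfies a Poisson-bracket stability inequality: there is a constant $C_0=C_0(M,\omega)>0$ such that
\[
|\zeta(h_1+h_2)-\zeta(h_1)-\zeta(h_2)| \le C_0\sqrt{\|\{h_1,h_2\}\|}
\]
for all $h_1,h_2 \in C^{\infty}(M)$. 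First I would record that, since $U$ is displaceable and each member of the given cover is $U$-dominated, every support $\text{supp}(f_j)$ is itself $U$-dominated and hence displaceable; therefore $\zeta(f_j)=0$ for all $j$.

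Next I would run a telescoping argument along the partial sums $g_m := f_1+\dots+f_m$, so that $g_0=0$, $g_N=1$, and $g_m=g_{m-1}+f_m$. Writing
\[
1=\zeta(g_N)-\zeta(g_0)=\sum_{m=1}^{N}\bigl(\zeta(g_m)-\zeta(g_{m-1})\bigr)
\]
and using $\zeta(f_m)=0$, each summand equals the additivity defect $\zeta(g_{m-1}+f_m)-\zeta(g_{m-1})-\zeta(f_m)$, which the stability inequality bounds in absolute value by $C_0\sqrt{\|\{g_{m-1},f_m\}\|}$. The key observation is that this consecutive bracket is itself controlled by $\nu_c$: since $g_{m-1}=\sum_j x_j f_j$ with $x=(1,\dots,1,0,\dots,0)\in K_N$ and $f_m=\sum_k y_k f_k$ with $y=e_m\in K_N$, the definition \eqref{eq-nu-class} yields $\|\{g_{m-1},f_m\}\|=\Phi(g_{m-1},f_m)\le \nu_c(\{f_j\})$.

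Combining these, I obtain $1\le \sum_{m=1}^{N} C_0\sqrt{\nu_c(\{f_j\})} = N\,C_0\sqrt{\nu_c(\{f_j\})}$, whence $\nu_c(\{f_j\})\ge 1/(C_0^2 N^2)$, which is \eqref{eq-partunity} with $C=C_0^{-2}$. The telescoping and the reduction of each consecutive bracket to $\nu_c$ are elementary; the genuine content, and the main obstacle, is the existence of $\zeta$ together with its Poisson-bracket stability inequality, which is the hard symplectic input coming from Floer-theoretic spectral invariants. I would emphasize that the exponent $\tfrac12$ in that inequality is exactly what produces the rate $N^{-2}$ rather than $N^{-1}$; obtaining the sharp form of this stability estimate (and confirming that $C_0$ depends only on $(M,\omega)$, not on $N$ or on the partition) is the step that requires the most care.
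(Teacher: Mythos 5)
Your proposal follows exactly the paper's own argument: the same quasi-state $\zeta$ built from the Floer-theoretic spectral invariant of the fundamental class, the same telescoping over the partial sums $g_m = f_1+\dots+f_m$, and the same observation that each consecutive bracket $\|\{g_{m-1},f_m\}\|$ is bounded by $\nu_c(\{f_j\})$ because $g_{m-1}$ and $f_m$ are combinations $\sum x_jf_j$, $\sum y_kf_k$ with $x,y$ in the cube $K_N$. The one genuine inaccuracy is your statement of the key input. You assert the stability inequality $|\zeta(h_1+h_2)-\zeta(h_1)-\zeta(h_2)| \le C_0\sqrt{\|\{h_1,h_2\}\|}$ unconditionally, for \emph{all} $h_1,h_2 \in C^\infty(M)$, with a constant $C_0$ depending only on $(M,\omega)$. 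That is stronger than what Floer theory provides: the inequality used in the paper, \eqref{eq-master}, holds only when the support of the second function is dominated by an open displaceable subset $U$, and its constant depends on $U$ (essentially through the displacement energy of $U$), not only on $(M,\omega)$. Indeed, with your unconditional input you would conclude \eqref{eq-partunity} with a constant independent of the displaceable set $U$, a strengthening which the paper explicitly says is known only for special manifolds such as complex projective spaces; so the input cannot hold in the generality you claim.

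Fortunately, your argument never uses the inequality in that generality: every application is to a pair $(g_{m-1}, f_m)$ in which the second function $f_m$ has support dominated by the displaceable set $U$, by the hypothesis on the cover. Replacing your unconditional inequality by the correct conditional one, the telescoping goes through verbatim and gives $1 \le N\sqrt{C^{-1}\,\nu_c(\{f_j\})}$, hence $\nu_c(\{f_j\}) \ge C/N^2$ with $C = C(U)$ --- which is precisely the statement of the theorem, whose constant is allowed to depend on $U$. Accordingly, your closing remark should be amended: one cannot ``confirm that $C_0$ depends only on $(M,\omega)$''; the constant does, and must, depend on the displaceable set $U$ dominating the cover.
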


\medskip
\noindent For certain symplectic manifolds
(for instance, for complex projective spaces) it suffices to require that the cover consists of displaceable subsets (not necessarily dominated by the same displaceable subset $U$), and the constant $C$ depends only
on $(M,\omega)$, see \cite{EPZ}. In this form, rigidity of partitions of unity can be used for proving non-displaceability of certain subsets which cannot be detected by the methods of classical differential geometry and topology. For instance, it yields non-displaceability of the Clifford torus in $\C P^n$ which has been already mentioned in Section \ref{sec-sympprelim} above (we refer to \cite{BirEP} for an elementary geometric argument).

It is currently unknown whether the asymptotic behavior $\sim N^{-2}$ in inequality
\eqref{eq-partunity} is optimal: this is a direction of an ongoing research.
Let us mention also that according to \cite[Theorem 3.6]{RS}, every closed $2n$-dimensional symplectic manifold can be covered by $2n+1$ displaceable subsets.

\medskip
\noindent
\begin{exam}\label{exam-sphere} {\rm Consider the unit sphere $S^2 = \{q_1^2 +q_2^2 +q_3^2 =1\} \subset \R^3$ equipped with the
standard area form. Take any open cover $\{V_1,...,V_N\}$ of the interval $[-1,1]$,
and put $$U_j = \{(q_1,q_2,q_3) \in S^2 \;:\; q_3 \in V_j\}\;.$$
The cover $\{U_j\}$ of the sphere admits a subordinated partition of unity of the form $\{f_j(q_3)\}$,
which is Poisson commutative. Of course, there is no contradiction with Theorem \ref{thm-partunity}:
Indeed, at least one of the sets $U_j$ necessarily contains
the equator $\{q_3=0\}$ and hence is not displaceable.}
\end{exam}

\medskip
\noindent The proof of Theorem \ref{thm-partunity}  is analogous to the one
of section 2.2 of \cite{EPZ} which deals with a special class of symplectic manifolds.
A recent work by Usher \cite{U} yields that this argument is in fact
applicable to all closed symplectic manifolds. The key tool used in the proof is the functional $\zeta: C^{\infty}(M) \to \R$ provided by Hamiltonian Floer theory:
$$\zeta(f) = \lim_{s\to\infty} \frac{c([M],sf)}{s}\;,$$
where $c([M],.)$ is the Floer-homological spectral invariant associated with the fundamental
class $[M]$ of $M$ (see \cite{Schwarz,Oh}). Floer theory is the Morse-Novikov theory
of the classical action functional $\int fdt-pdq$ on the loop space of the symplectic manifold $M$,
and the spectral invariant $c([M],f)$ is a critical value of the action functional obtained via a suitable homological minmax (see \cite[Chapter 12]{MS} for a detailed exposition).

\medskip
\noindent
The following properties of $\zeta$ are relevant for us \cite{EP-qs}:
\begin{itemize}
\item[{(i)}] $\zeta(1)=1$;
\item[{(ii)}] $\zeta(f)=0$ provided the support of $f$ is displaceable;
\item[{(iii)}] $\zeta(f) \geq 0$ if $f \geq 0$.
\end{itemize}
Furthermore, as it is explained in \cite{EPZ} we have that
\begin{equation}\label{eq-master}
|\zeta(f+g) -\zeta(f) -\zeta(g)| \leq \sqrt {C^{-1}\cdot ||\{f,g\}||}
\end{equation}
for all functions $f,g$ provided the support of $g$ is dominated by an open displaceable subset $U$.
Here the constant $C$ depends only on $U$.

\medskip
\noindent
 The functional $\zeta$ is an example of
{\it a partial symplectic quasi-state}, a notion introduced in \cite{EP-qs}. Partial symplectic quasi-states
are non-linear functionals on $C^{\infty}(M)$ which, roughly speaking, are characterized by properties
(i)-(iii) and \eqref{eq-master} above. They  play a basic role in function theory on symplectic manifolds \cite{EPZ,BEP} and serve as a useful tool for a number of problems in symplectic topology. The origins of this notion go back to foundations of quantum mechanics \cite{EP-qs,EPZ-physics} (see also a brief discussion in Section \ref{sec-disc} below). We refer to \cite{EP-qs} for an axiomatic definition and more examples of partial symplectic quasi-states.

\medskip
\noindent
{\bf Proof of Theorem \ref{thm-partunity}:}   Put $t=\nu_c(\{f_j\})$ and $g_k = \sum_{j=1}^k f_k$.
Note that by (ii) $\zeta(g_1)=0$. Since $g_{k+1}=g_{k} + f_{k+1}$,
we have that by (ii),(iii) and \eqref{eq-master}
$$\zeta(g_{k+1}) \leq \zeta(g_k) + \sqrt{C^{-1}t}\;.$$
Thus by (i),
$$1=\zeta(g_N) \leq N\sqrt{C^{-1}t}\;,$$
which yields $t \geq C/N^2$, as required.
\qed

\section{Preliminaries on POVMs}\label{sec-povms} Let $H$ be a complex Hilbert space. Denote by $\cL(H)$ the space of all bounded Hermitian operators on $H$. Consider a set $\Omega$ equipped with a $\sigma$-algebra $\cC$ of its subsets. An $\cL(H)$-valued {\it positive operator valued measure} (POVM) $F$ on $(\Omega,\cC)$ is a countably additive map $F: \cC \to \cL(H)$ which takes a subset $X \in \cC$ to a positive operator $F(X) \in \cL(H)$ and which is normalized by $F(\Omega) = \id$.

An important class of POVMs is formed by {\it projection valued measures} for which all the operators
$F(X)$, $X \in \cC$ are orthogonal projectors. In this case $F(X \cap Y)= F(X)F(Y)$ for
any pair of subsets $X,Y \in \cC$.

POVMs naturally appear in quantum measurement theory \cite{Busch} where they play a role
of generalized observables. The space $\Omega$ is called
the {\it value space} of the observable. Pure states of the system are represented by the points
of the projective space $[\xi] \in \mathbb{P} (H)$, where $\xi \in H$ is a unit vector. When the system is in a state $[\xi]$, the probability of finding the observable $F$ in a subset $X \in \cC$ is postulated to be
$\langle F(X)\xi,\xi \rangle$. With this language, projection valued measures correspond to {\it sharp}
observables.

It is instructive to compare this model with the traditional von Neumann quantum mechanics where an
observable is given by a bounded Hermitian operator, say $A$ on $H$. For our purposes it suffices to restrict ourselves to the case when $H$ is finite dimensional. Look at the spectral decomposition
$$A = \sum_{j=1}^N \lambda_j P_j$$
of $A$. Here $\{\lambda_j\}$ are pair-wise distinct eigenvalues of $A$ and $P_j$
is the orthogonal projector to the eigenspace corresponding to $\lambda_j$. If the system is in a state
$[\xi]$, the observable $A$ takes values $\lambda_j$ with probability $\langle P_j \xi,\xi\rangle$.
Thus the observable $A$ is fully described by the projector-valued measure  $\sum P_j\delta_{\lambda_j}$ on $\R$ where $\delta$ stands for the Dirac delta function.

We shall often deal with POVMs on the finite set $\Omega_N=\{1,...,N\}$.
Such a POVM, say $A$, is fully determined by a collection of operators $A_j:= A(\{j\})$, $j=1,...,N$.
Vice versa, any collection $\{A_j\}$, $j=1,...,N$ of positive bounded Hermitian operators with
$\sum_j A_j =\id$ defines a POVM on $\Omega_N$.

\section{Unsharpness and noise}\label{sec-unsharpness}
Here we describe an approach to unsharpness  of POVMs based on the notion of the {\it (intrinsic) noise operator}, see \cite[\S 2]{BHL1}, \cite[\S 4]{Ozawa}, \cite[\S 3]{BHL2}, \cite[\S 2]{Massar}.
(Some authors refer to intrinsic uncertainty instead of intrinsic noise.) The definition is especially
transparent in the case of an $\cL(H)$-valued POVM $A=\{A_1,...,A_N\}$ on the finite set $\Omega_N=\{1,...,N\}$, where $H$ is a finite-dimensional Hilbert space. Given a vector $x=(x_1,...,x_N) \in \R^N$, consider a POVM
$\widehat{A}(x) = \sum A_j \cdot \delta_{x_j}$ on $\R$ as well as a Hermitian operator
\begin{equation}\label{eq-Ax}
A(x) = \sum x_j A_j
\end{equation}
on $H$.
Assume that the system is prepared in a state $[\xi]$, where $\xi$ is a unit vector in $H$.
The POVM-observable $\widehat{A}(x)$ is described by the random variable $\phi$ which takes value
$x_j$ with probability $\langle A_j\xi,\xi\rangle$. The von Neumann observable $A(x)$ is described by a random variable $\psi$
which takes value $\lambda_j$ with probability $\langle P_j\xi,\xi \rangle$, where
$A(x) = \sum \lambda_j P_j$ is the spectral decomposition of $A(x)$.
Comparing the expectations, we see that
$$\mathbb{E}\psi = \sum \lambda_j \cdot \langle P_j\xi,\xi \rangle = \langle A(x) \xi,\xi\rangle = \sum x_j \langle A_j \xi,\xi\rangle = \mathbb{E}\phi\;.$$
Further,
$$\mathbb{E}\psi^2= \sum \lambda_j^2 \cdot \langle P_j\xi,\xi \rangle =  \langle A(x)^2 \xi,\xi\rangle\;,$$
and
$$\mathbb{E}\phi^2 = \Big{\langle} \Big{(}\sum x_j^2 A_j\Big{)} \xi,\xi \Big{\rangle}\;.$$
The noise operator is defined as
\begin{equation}\label{eq-delta-def}
\Delta_A (x):= \sum_{j=1}^N x_j^2 A_j- A(x)^2 = \sum_{j=1}^N (A(x)-x_j\id)A_j(A(x)-x_j\id)\;,
\end{equation}
so that the difference of the variances of $\phi$ and $\psi$ is given by
$$\text{Var}(\phi) -\text{Var}(\psi) = \langle \Delta_A (x)\xi,\xi\rangle\;.$$
The noise operator has a number of interesting properties:
In particular, $\Delta_A(x)$ is always non-negative: look at the second equality in \eqref{eq-delta-def} which we learned from \cite{Massar}. In addition, $\Delta_A(x)\equiv 0$ if and only if
$A$ is a projection valued measure, that is a sharp observable. In fact, for an unsharp $A$,  $\Delta_A(x) \neq 0$ provided all $x_j$'s are pair-wise distinct, see e.g. \cite[Proposition 7]{KLY}.

The above discussion leads us to the following definition: given a POVM $A$ on $\Omega_N$,
we measure its unsharpness through the {\it magnitude of noise}
$$\cN(A):= \max_{x \in K_N} ||\Delta_A(x)||_{op}\;,$$
where $K_N$ is the cube $[-1,1]^N$. Observe that
$$0 \leq \Delta_A(x) \leq \sum x_j^2 A_j \leq \sum A_j=\id$$
for all $x \in K_N$, and therefore
\begin{equation}\label{eq-interval-u}
0 \leq \cN(A) \leq 1
\end{equation}
for all POVMs $A$.

It turns out that the magnitude of noise of a POVM $A$
can be estimated through the degree of non-commutativity of $A$ which is defined as follows
(compare with formula \eqref{eq-nu-class} above):
For $x =(x_1,...,x_N) \in \R^N$ consider the operator
$A(x):= \sum_j x_j A_j$ which already appeared in \eqref{eq-Ax} above.
Put
\begin{equation}\label{eq-nuq}
\nu_q(A) := \max_{x,y \in K_N} ||[A(x),A(y)]||_{op}
\end{equation}
(here subindex $q$ stands for quantum).
The next result is a minor modification of a theorem by Janssens \cite[Theorem 3]{Janssens} (cf. \cite[Corollary 2]{MI}):

\medskip
\noindent \begin{thm}\label{thm-uncert}
\begin{equation}\label{eq-ineq-uncert}
\cN(A) \geq \frac{1}{2}\nu_q(A)\;.
\end{equation}
\end{thm}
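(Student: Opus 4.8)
The plan is to exploit the manifestly positive factorization of the noise operator recorded in the second equality of \eqref{eq-delta-def} and to polarize it, so that the commutator $[A(x),A(y)]$ appears as twice the imaginary part of a positive sesquilinear form; the bound \eqref{eq-ineq-uncert} will then follow from the operator Cauchy--Schwarz inequality. Write $A_j^{1/2}$ for the positive square root of the positive operator $A_j$, and for $x \in \R^N$ abbreviate $A(x) = \sum_j x_j A_j$ as in \eqref{eq-Ax}. The second form of \eqref{eq-delta-def} exhibits $\Delta_A(x)$ as $\sum_j T_j(x)^* T_j(x)$ with $T_j(x) := A_j^{1/2}(A(x) - x_j\id)$.

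First I would introduce the polarized expression
\begin{equation*}
G(x,y) := \sum_{j=1}^N (A(x) - x_j\id) A_j (A(y) - y_j\id) = \sum_{j=1}^N T_j(x)^* T_j(y),
\end{equation*}
so that $G(x,x) = \Delta_A(x)$. A direct expansion, using $\sum_j A_j = \id$, collapses the cross terms and gives $G(x,y) = \sum_j x_j y_j A_j - A(x)A(y)$. Since the first summand is symmetric in $x$ and $y$, subtracting the transposed expression yields the key identity
\begin{equation*}
[A(x),A(y)] = G(y,x) - G(x,y).
\end{equation*}
This is the heart of the matter: the commutator has been written as the difference of a positive sesquilinear form and its adjoint.

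Next I would fix a unit vector $\eta \in H$ and set $u = (T_j(x)\eta)_j$ and $v = (T_j(y)\eta)_j$ in the Hilbert space $\bigoplus_j H$. Then $\langle G(x,y)\eta,\eta\rangle = \langle v,u\rangle$, while $\langle \Delta_A(x)\eta,\eta\rangle = \|u\|^2$ and $\langle \Delta_A(y)\eta,\eta\rangle = \|v\|^2$. The identity above gives $\langle [A(x),A(y)]\eta,\eta\rangle = \overline{\langle v,u\rangle} - \langle v,u\rangle = -2i\,\IM\langle v,u\rangle$, whence by Cauchy--Schwarz
\begin{equation*}
|\langle [A(x),A(y)]\eta,\eta\rangle| \leq 2|\langle v,u\rangle| \leq 2\|u\|\,\|v\| = 2\sqrt{\langle \Delta_A(x)\eta,\eta\rangle\,\langle \Delta_A(y)\eta,\eta\rangle}.
\end{equation*}
For $x,y \in K_N$ each factor under the root is at most $\cN(A)$, so the right-hand side is at most $2\cN(A)$. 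Since $[A(x),A(y)]$ is skew-Hermitian, its operator norm equals $\sup_{\|\eta\|=1}|\langle[A(x),A(y)]\eta,\eta\rangle|$; taking the supremum over $\eta$ and then the maximum over $x,y \in K_N$ gives $\nu_q(A) \leq 2\cN(A)$, which is \eqref{eq-ineq-uncert}.

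The only real obstacle is the bookkeeping in the polarization step: one must verify that the cross terms in $G(x,y)$ collapse correctly via $\sum_j A_j = \id$ and that the commutator --- rather than the anticommutator --- is what survives after antisymmetrization, which is exactly why one takes the difference $G(y,x) - G(x,y)$ rather than the sum. Everything after that is a single application of Cauchy--Schwarz together with the elementary fact that the operator norm of a skew-Hermitian operator is computed by its numerical range; the constant $\tfrac12$ in \eqref{eq-ineq-uncert} is precisely the factor of $2$ produced by passing to twice the imaginary part.
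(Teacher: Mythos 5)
Your proof is correct, and it takes a genuinely different route from the paper's. The paper deduces the theorem from a lemma which it proves by dilation: invoke Naimark's theorem to write $A_j = \Psi(P_j)$, the compression of a projector valued measure $\{P_j\}$ on a larger Hilbert space $H'$, then cite Janssens' inequality for compressions of \emph{commuting} Hermitian operators, applied to $B_1 = \sum_j x_j P_j$ and $B_2 = \sum_j y_j P_j$; this yields
\[
||\Delta_A(x)||_{op}^{1/2}\cdot||\Delta_A(y)||_{op}^{1/2} \;\geq\; \tfrac{1}{2}\,||\,[A(x),A(y)]\,||_{op}\,,
\]
from which the theorem follows exactly as in your last step. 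Your argument establishes the same lemma --- in fact in the sharper pointwise form $|\langle [A(x),A(y)]\eta,\eta\rangle| \leq 2\sqrt{\langle \Delta_A(x)\eta,\eta\rangle\,\langle \Delta_A(y)\eta,\eta\rangle}$ --- without any dilation: you polarize the positive factorization of the noise operator (the second equality in the definition of $\Delta_A$, which the paper attributes to Massar), observe that antisymmetrization produces exactly the commutator, and finish with Cauchy--Schwarz in $\bigoplus_j H$ together with the fact that the operator norm of a skew-Hermitian (hence normal) operator equals its numerical radius. All the individual steps check out: the expansion $G(x,y) = \sum_j x_j y_j A_j - A(x)A(y)$ is correct because $\sum_j A_j = \id$ collapses the cross terms, the identity $[A(x),A(y)] = G(y,x) - G(x,y)$ follows since $\sum_j x_j y_j A_j$ is symmetric in $x$ and $y$, and the passage from the numerical-range bound to the operator norm is legitimate precisely because the commutator of Hermitian operators is skew-Hermitian. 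As for what each approach buys: yours is self-contained and elementary, stays on the original Hilbert space, and makes the constant $\tfrac{1}{2}$ transparent (it is the factor $2$ arising from twice the imaginary part); the paper's is shorter on the page because it outsources the analytic content to Janssens' theorem, and its dilation picture --- a POVM as a compressed sharp observable --- meshes conceptually with the smearing and unsmearing discussion in the sections that follow.
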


\medskip
\noindent
Theorem \ref{thm-uncert} is an immediate consequence of the following lemma:

\medskip
\noindent\begin{lemma} (cf. \cite{Janssens}, see also \cite{MI}).
$$||\Delta_A(x)||^{1/2}_{op}\cdot ||\Delta_A(y)||^{1/2}_{op} \geq \frac{1}{2} \cdot ||\;[A(x),A(y)]\;||_{op}\;$$
for all $x,y \in K_N$.
\end{lemma}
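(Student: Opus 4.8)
The plan is to exploit the second, manifestly positive expression for the noise operator in \eqref{eq-delta-def} and to massage the commutator $[A(x),A(y)]$ into the same ``sandwiched'' shape, so that a double Cauchy--Schwarz estimate closes the gap. Write $B=A(x)$, $C=A(y)$ and introduce the Hermitian operators $u_j := A(x)-x_j\id$ and $v_j := A(y)-y_j\id$. In this notation the identity \eqref{eq-delta-def} reads $\Delta_A(x)=\sum_j u_j A_j u_j$ and $\Delta_A(y)=\sum_j v_j A_j v_j$, and in particular $\sum_j \langle A_j^{1/2}u_j\xi,A_j^{1/2}u_j\xi\rangle = \langle \Delta_A(x)\xi,\xi\rangle$ for every vector $\xi$ (and similarly for $y$), where $A_j^{1/2}\ge 0$ is the positive square root of $A_j$.

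The key algebraic step, and the one that uses the POVM structure essentially, is the commutator identity
\begin{equation*}
[B,C] = \sum_{j=1}^N \big(v_j A_j u_j - u_j A_j v_j\big).
\end{equation*}
To verify it I would expand $\sum_j u_j A_j v_j$ and collect terms using $\sum_j A_j = \id$, $\sum_j x_j A_j = B$ and $\sum_j y_j A_j = C$; the cross-terms telescope to $\sum_j x_j y_j A_j - BC$, and subtracting the analogous computation for $\sum_j v_j A_j u_j$ cancels the diagonal term $\sum_j x_j y_j A_j$, leaving exactly $BC-CB$. This is precisely where the normalization of a POVM does the work.

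With the identity in hand the estimate is routine. Since $B,C$ are Hermitian, $[B,C]$ is anti-Hermitian, hence normal, so $\|[B,C]\|_{op}=\sup_{\|\xi\|=1}|\langle[B,C]\xi,\xi\rangle|$. Fixing a unit vector $\xi$ and using that $u_j,v_j,A_j$ are Hermitian, one rewrites $\langle[B,C]\xi,\xi\rangle = 2i\sum_j \IM\langle A_j u_j\xi, v_j\xi\rangle$, whence, after splitting each term as $A_j = A_j^{1/2}A_j^{1/2}$,
\begin{equation*}
|\langle[B,C]\xi,\xi\rangle| \le 2\sum_{j=1}^N \big|\langle A_j^{1/2}u_j\xi, A_j^{1/2}v_j\xi\rangle\big|.
\end{equation*}
Applying Cauchy--Schwarz termwise and then once more to the resulting sum bounds the right-hand side by $2\big(\sum_j \|A_j^{1/2}u_j\xi\|^2\big)^{1/2}\big(\sum_j \|A_j^{1/2}v_j\xi\|^2\big)^{1/2} = 2\langle\Delta_A(x)\xi,\xi\rangle^{1/2}\langle\Delta_A(y)\xi,\xi\rangle^{1/2}$, which is at most $2\|\Delta_A(x)\|_{op}^{1/2}\|\Delta_A(y)\|_{op}^{1/2}$. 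Taking the supremum over $\xi$ gives $\|[A(x),A(y)]\|_{op}\le 2\|\Delta_A(x)\|_{op}^{1/2}\|\Delta_A(y)\|_{op}^{1/2}$, which is the assertion.

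I expect the only genuinely delicate point to be the commutator identity: the cancellation of the cross-terms hinges on $\{A_j\}$ summing to $\id$, and this is exactly the feature that forces the noise operators to control the commutator. Everything downstream --- the passage to the quadratic form via normality, the symmetric square-root splitting of $A_j$, and the two applications of Cauchy--Schwarz --- is standard once the sandwiched form has been exposed.
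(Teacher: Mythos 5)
Your proof is correct, but it takes a genuinely different route from the paper. The paper does not argue on $H$ at all: it invokes Naimark's theorem to dilate the POVM $\{A_j\}$ to a projector valued measure $\{P_j\}$ on a larger space $H'$, writes $A_j = \Psi(P_j)$ with $\Psi(B)=\Pi B \Pi^*$, and then applies, as a black box, Janssens' inequality \eqref{eq-Janss} for compressions of the \emph{commuting} Hermitian operators $B_1=\sum x_iP_i$, $B_2=\sum y_iP_i$; the desired inequality drops out by identifying $\Psi(B_1^2)-\Psi(B_1)^2$ with $\Delta_A(x)$. You instead work entirely on $H$: you use the sandwiched form $\Delta_A(x)=\sum_j u_jA_ju_j$ from the second equality in \eqref{eq-delta-def}, establish the identity $[A(x),A(y)]=\sum_j(v_jA_ju_j-u_jA_jv_j)$ (which I have checked; the normalization $\sum_jA_j=\id$ indeed makes the cross terms collapse as you claim), pass to the quadratic form via normality of the anti-Hermitian commutator, and close with the square-root splitting $A_j=A_j^{1/2}A_j^{1/2}$ and two applications of Cauchy--Schwarz. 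In effect you have reproved Janssens' result in the POVM setting rather than quoting it. What your route buys is self-containedness and transparency: no dilation theorem, no external citation, and it is visible exactly where positivity and normalization of the POVM enter; it also extends verbatim (sums replaced by integrals) to the general setting of Remark \ref{rem-general}. What the paper's route buys is brevity on the page and a conceptual link between unsharpness of $A$ and compression of a sharp dilation --- the same dilation picture that underlies the smearing discussion in Section \ref{sec-smearing}.
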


\begin{proof} Let $A=\{A_1,...,A_N\}$ be an $\cL(H)$-valued POVM on $\Omega_N$.
By Naimark's theorem (see e.g. \cite{Busch}) there exists a Hilbert space $H'$ containing $H$,
and a projector valued measure $\{P_1,...,P_N\}$ on $H'$ so that $A_i = \Psi(P_i)$,
where
$$\Psi(B) := \Pi B \Pi^* \;\; \forall B \in \cL(H')\;,$$
and $\Pi: H' \to H$ is the orthogonal projector. Janssens  \cite[Theorem 3]{Janssens} showed that
for every pair $B_1,B_2$ of commuting Hermitian operators on $H'$
\begin{equation}\label{eq-Janss}
||\Psi(B_1^2) -\Psi(B_1)^2||_{op}^{1/2} \cdot ||\Psi(B_2^2) -\Psi(B_2)^2||_{op}^{1/2} \geq
\frac{1}{2}\cdot ||\;[\Psi(B_1),\Psi(B_2)]\;||_{op}\;.
\end{equation}
Applying now \eqref{eq-Janss} with $B_1 =\sum x_i P_i$ and $B_2=\sum y_i P_i$
and observing that
$$\Psi(B_1) = A(x),\;\; \Psi(B_1^2)-\Psi(B_1)^2 = \Delta_A(x)\;,$$
$$\Psi(B_2) = A(y),\;\; \Psi(B_1^2)-\Psi(B_1)^2 = \Delta_A(y)\;,$$
we get the desired inequality.
\end{proof}

\medskip
\noindent \begin{rem}\label{rem-general} {\rm The results of the present section and their proofs readily extend to $\cL(H)$-valued
POVMs on $(\Omega,\cC)$ where $H$ is an infinite-dimensional Hilbert space and $(\Omega,\cC)$ is
an arbitrary set equipped with a $\sigma$-algebra of subsets. Since we shall face such a degree of generality
in Section \ref{sec-interpret} below, let us briefly discuss the corresponding definitions and statements.
Let $A: \cC \to \cL(H)$ be a POVM.
Denote by $K(\Omega)$ the set of measurable functions $x:\Omega \to \R$ with $\max |x| \leq 1$.
(Observe that for the finite set $\Omega_N$ every function $x$ from $K(\Omega_N)$ is canonically identified with
a vector $x=(x(1),...,x(N))$ lying in the cube $K_N = [-1,1]^N$.)
For $x \in K(\Omega)$ put $A(x):= \int x\; dA$ and define the noise operator $\Delta_A(x) = \int_\Omega x^2 dA - A(x)^2$. Define the magnitude of noise
$\cN(A) = \sup_{x \in K(\Omega)} ||\Delta_A(x)||_{op}$ and
the magnitude of non-commutativity
$\nu_q(A) = \sup_{x,y \in K(\Omega)} ||[A(x),A(y)]||_{op}$. Inequality \eqref{eq-ineq-uncert}
remains valid for this more general setting. }\end{rem}

\medskip

Let us mention that every sharp observable is commutative, but not vice versa.
Consider, for instance, a POVM $A_1 = ... = A_N = (1/N)\cdot \id $,
where $N$ is even. Take a vector $x_* \in K_N$ such that half of its coordinates are equal
to $1$ and half to $-1$. Then $A(x_*)=0$ and $\Delta_A(x_*) = \id$. Together with \eqref{eq-interval-u}
this yields $\cN(A)=1$, while $\nu_q(A) =0$ since $A$ is commutative. We see that inequality  \eqref{eq-ineq-uncert} is far from being optimal: the commutative POVM $A$ has the maximal possible magnitude of noise. In the next section we refine inequality \eqref{eq-ineq-uncert} in order to fix this problem.

\section{The effect of smearing}\label{sec-smearing}
In this section we deal with $\cL(H)$-valued POVMs defined on Hausdorff locally compact second countable topological spaces. (For the moment, the Hilbert space $H$ is not assumed to be finite-dimensional.)
Let $\Omega$ and $\Theta$ be such topological spaces and let $\cC,\cD$ be their Borel $\sigma$-algebras respectively. Denote by $\cP(\Omega)$ the set of Borel probability
measures on $\Omega$. A {\it Markov kernel} is a map
$$\gamma: \Theta \to \cP(\Omega),\; w \mapsto \gamma_w$$
such that the function $w \to \gamma_w(X)$ on $\Theta$ is measurable for every $X \in \cC$.
Let $A$ and $B$ be POVMs on $(\Omega,\cC)$ and $(\Theta, \cD)$ respectively. We say that
$A$ is a {\it smearing}\footnote{Some authors call it randomization or fuzzification.} of $B$
\cite{Busch, JP, Ali}
if there exists a Markov kernel $\gamma$ so that
$$A(X) = \int_\Theta \gamma_w(X)\; dB(w)\;\; \forall X \in \cC\;.$$
In the physical language, each element $w$ of the value set $\Theta$ of $B$ diffuses into
a subset $X \in \cC$ with probability $\gamma_w(X)$.

\medskip

For instance, if $\Omega= \Omega_N=\{1,...,N\}$,
a Markov kernel $\gamma$ is given by a collection of non-negative measurable functions
$\gamma_j$ on $\Theta$, $j=1,...,N$ so that $\sum_j \gamma_j(w)=1$ for all $w \in \Theta$,
that is by a measurable partition of unity.
A POVM $A=\{A_1,...,A_N\}$ on $\Omega_N$ is a smearing
of a POVM $B$ on $(\Theta,\cD)$ if
\begin{equation} \label{eq-smearing-int}
A_j = \int_\Theta \gamma_j dB\;.
\end{equation}
A specific example
of such a situation will arise in Section \ref{sec-interpret} below.

\medskip

It turns out that the magnitude of non-commutativity $\nu_q$ defined in \eqref{eq-nuq} above
behaves monotonically with respect to smearing:

\medskip
\noindent
\begin{prop}\label{prop-smearings} Assume that $A$ is a smearing of $B$.
Then $\nu_q(B) \geq \nu_q(A)$.
\end{prop}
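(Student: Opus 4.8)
The plan is to show that smearing can only reduce the magnitude of non-commutativity by exhibiting, for each test vector $x \in K(\Omega)$ used in the definition of $\nu_q(A)$, a corresponding test function $u \in K(\Theta)$ for $B$ such that $A(x) = B(u)$. Once this correspondence is established, the operators $A(x)$ that appear in the supremum defining $\nu_q(A)$ form a subset of the operators $B(u)$ that appear in the supremum defining $\nu_q(B)$, and monotonicity follows immediately because the supremum over a smaller family of commutators cannot exceed the supremum over a larger one.

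First I would compute $A(x)$ using the smearing relation. Recalling that $A(X) = \int_\Theta \gamma_w(X)\, dB(w)$, I expect that integrating $x$ against the POVM $A$ and using Fubini/the defining integral identity yields
\begin{equation*}
A(x) = \int_\Omega x\, dA = \int_\Theta \Big( \int_\Omega x(s)\, d\gamma_w(s) \Big)\, dB(w)\;.
\end{equation*}
The inner integral defines a function $u$ on $\Theta$ by $u(w) := \int_\Omega x(s)\, d\gamma_w(s)$, which is the expectation of $x$ under the probability measure $\gamma_w$. Since each $\gamma_w$ is a probability measure and $|x| \leq 1$ pointwise, we get $|u(w)| \leq 1$ for all $w$, so $u \in K(\Theta)$; measurability of $u$ follows from the defining measurability condition on the Markov kernel. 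With this $u$, the displayed computation reads exactly $A(x) = B(u)$.

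Given the identity $A(x) = B(u)$ (and similarly $A(y) = B(v)$ for a second test vector), the conclusion is direct: for any $x, y \in K(\Omega)$,
\begin{equation*}
\|[A(x), A(y)]\|_{op} = \|[B(u), B(v)]\|_{op} \leq \sup_{u',v' \in K(\Theta)} \|[B(u'), B(v')]\|_{op} = \nu_q(B)\;.
\end{equation*}
Taking the supremum over $x, y \in K(\Omega)$ on the left gives $\nu_q(A) \leq \nu_q(B)$, as claimed.

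The main obstacle I anticipate is purely technical rather than conceptual: justifying the interchange of integration in the step $\int_\Omega x\, dA = \int_\Theta u\, dB$ in the full generality of operator-valued measures on locally compact second countable spaces. This requires care about what $\int x\, dA$ means as an operator-valued integral and about applying a Fubini-type theorem to the composite integral defining $A(x)$; one would likely reduce to scalar statements by pairing with vectors $\xi, \eta \in H$, so that $\langle A(x)\xi, \eta\rangle = \int_\Omega x\, d\langle A(\cdot)\xi,\eta\rangle$, apply the smearing identity and Fubini at the level of these complex measures, and then conclude the operator identity $A(x) = B(u)$ from its validity on all matrix entries. Everything else is routine, and the key bound $|u| \leq 1$ that keeps $u$ inside $K(\Theta)$ is exactly the statement that each $\gamma_w$ is a probability measure.
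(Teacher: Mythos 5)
Your proof is correct and follows essentially the same route as the paper: your function $u(w)=\int_\Omega x\,d\gamma_w$ is exactly the paper's affine map $\Gamma:K(\Omega)\to K(\Theta)$, and the key identity $A(x)=B(\Gamma x)$ together with the supremum comparison is precisely the paper's argument. Your extra care about the Fubini step and the bound $|u|\leq 1$ only fleshes out what the paper dismisses with ``one readily verifies.''
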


\begin{proof} We work in the notation of Remark \ref{rem-general}.
Define an affine map $\Gamma: K(\Omega) \to K(\Theta)$ by
$$(\Gamma x)(w) = \int_\Omega x\;d\gamma_w\;.$$ One readily verifies that
$$A(x) = \int_\Omega x\; dA = \int_\Theta (\Gamma x) \; dB = B(\Gamma x)\;.$$
Thus
$$\nu_q(B) \geq \sup_{x,y \in K(\Omega)} ||[B(\Gamma x),B(\Gamma y)]||_{op}
= \sup_{x,y \in K(\Omega)} ||[A(x),A(y)]||_{op}=\nu_q(A)\;.$$
\end{proof}

\medskip

Let $A$ and $B$ be two POVMs so that $A$ is a smearing of $B$. Recall that smearing can be interpreted
as a diffusion from the value set of $B$ to the one of $A$. Therefore, if the noise $\cN(B)$ is strictly
less than $\cN(A)$, we think of the increment $\cN(A)-\cN(B)$
as of a random component of the noise of $A$. An attempt to extract the systematic (as opposed to the random)
component of the intrinsic noise of a POVM $A$ leads to the following definition:

\medskip
\noindent
\begin{defin}\label{def-pers}{\rm A {\it systematic noise} of a POVM $A$ is defined as $$\cN_s(A) = \inf \cN(B)\;,$$ where the infimum is taken over all POVMs $B$ so that $A$ is a smearing of $B$ (subindex $s$ stands for systematic).
}\end{defin}

\medskip

 It is known \cite[Section 5]{Ali} (cf. \cite{JP}) that every commutative POVM on a Hausdorff locally compact second countable space is necessarily a smearing of a sharp observable, that is of a projector valued measure. In particular, $\cN_s(A)=0$ provided $\nu_q(A)=0$. Our next result shows that the converse statement is also true:

\medskip
\noindent
\begin{thm}\label{cor-uncert-sm} (Unsharpness principle for POVMs).
$$\cN_s(A) \geq  \frac{1}{2}\nu_q(A)\;$$
for every $\cL(H)$-valued POVM $A$ on $\Omega_N$.
\end{thm}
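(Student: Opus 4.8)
The plan is to derive the statement by simply combining the two facts already established for smearings: the noise--non-commutativity inequality of Theorem \ref{thm-uncert} and the monotonicity of non-commutativity under smearing recorded in Proposition \ref{prop-smearings}. The guiding observation is that any POVM $B$ competing for the infimum in the definition of $\cN_s(A)$ already satisfies $\cN(B) \geq \frac{1}{2}\nu_q(B)$ on its own, while passing from $B$ to its smearing $A$ can only decrease the magnitude of non-commutativity, so that $\nu_q(B)$ always stays above the fixed target $\nu_q(A)$.

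Concretely, I would fix an arbitrary POVM $B$ on a space $(\Theta,\cD)$ such that $A$ is a smearing of $B$. First I would apply Theorem \ref{thm-uncert} to $B$, obtaining $\cN(B) \geq \frac{1}{2}\nu_q(B)$. Next I would invoke Proposition \ref{prop-smearings}, which gives $\nu_q(B) \geq \nu_q(A)$. Chaining these yields $\cN(B) \geq \frac{1}{2}\nu_q(A)$ for every admissible $B$. Taking the infimum over all such $B$ and recalling that $\cN_s(A) = \inf \cN(B)$ then produces the claimed bound $\cN_s(A) \geq \frac{1}{2}\nu_q(A)$.

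The only point requiring care --- and the main (mild) obstacle --- is one of generality rather than of genuine difficulty: the auxiliary POVM $B$ need not live on the finite set $\Omega_N$. Indeed, by the structure theorem for commutative POVMs an optimal (or nearly optimal) $B$ may be defined on an infinite value space, and the Naimark dilation may produce an infinite-dimensional ambient Hilbert space. I would therefore apply not the finite-dimensional Theorem \ref{thm-uncert} but its extension recorded in Remark \ref{rem-general}, which asserts that $\cN(\,\cdot\,) \geq \frac{1}{2}\nu_q(\,\cdot\,)$ holds verbatim for $\cL(H)$-valued POVMs on an arbitrary measurable space $(\Theta,\cD)$ with $H$ possibly infinite-dimensional; likewise Proposition \ref{prop-smearings} was proved in exactly this generality. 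With these two general-setting versions in hand, the three-line chain of inequalities above goes through unchanged, and no further estimate is needed.
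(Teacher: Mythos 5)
Your proposal is correct and follows essentially the same argument as the paper: apply Theorem \ref{thm-uncert} to any $B$ of which $A$ is a smearing, chain it with Proposition \ref{prop-smearings} to get $\cN(B) \geq \frac{1}{2}\nu_q(B) \geq \frac{1}{2}\nu_q(A)$, and take the infimum over all such $B$. Your explicit appeal to Remark \ref{rem-general} to handle competitors $B$ living on infinite value spaces is exactly the generality the paper relies on implicitly, so no gap remains.
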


\medskip
\noindent Indeed, combining Theorem \ref{thm-uncert} with Proposition \ref{prop-smearings} we get that if $A$ is a smearing of $B$,
$$\cN(B) \geq \frac{1}{2}\nu_q(B) \geq \frac{1}{2}\nu_q(A)\;,$$
which yields the desired inequality.

\medskip
\noindent Let us mention also that the magnitude of noise $\cN(A)$ does not behave in a definite way under smearing: in general, it may either increase or decrease. Theorem \ref{cor-uncert-sm} provides a constraint for a possible decrease in case when POVM $A$ is non-commutative.

\section{Berezin-Toeplitz quantization}\label{sec-BT} Recall the general scheme of the Berezin-Toeplitz quantization. As above, we denote by $\cL(H)$ the space of bounded Hermitian operators acting on a complex Hilbert space $H$.
Let $(M,\omega)$ be a closed symplectic manifold. The Berezin-Toeplitz quantization
consists of a sequence of finite-dimensional complex Hilbert spaces $H_m$, $m \to \infty$ of the increasing dimension and a family
of surjective $\R$-linear maps $T_m: C^{\infty}(M) \to \cL(H_m)$ with the following properties:
\begin{itemize}
\item[{(BT1)}] $T_m(1) =\id$;
\item[{(BT2)}] $T_m(f) \geq 0$ for $f \geq 0$;
\item[{(BT3)}]  $||T_m(f)||_{op} = ||f||+ O(1/m)$;
\item[{(BT4)}] $||i \cdot m[T_m(f),T_m(g)] - T_m(\{f,g\})||_{op} = O(1/m)$;
\item[{(BT5)}]  $||T_m(f^2)-T_m(f)^2||_{op} = O(1/m)$
\end{itemize}
as $m \to \infty$ for all $f,g \in C^{\infty}(M)$.
Here $||f||$ stands for the uniform norm of a function $f \in C^{\infty}(M)$,
$\{f,g\}$ for the Poisson bracket, $||A||_{op}$ for the operator norm of $A \in \cL(H)$ and
$[A,B]$ for the commutator $AB-BA$. The number $m$ plays the role of the quantum number,
while the Planck constant $\hbar$ equals $2\pi/m$,
so that $m \to \infty$ corresponds to the classical limit (see \cite{classiclimit} for an illuminating
discussion on the classical limit). Let us emphasize that the remainders $O(1/m)$ in the formulas above  do depend on the norms of derivatives of functions $f$ and $g$.

Existence of the Berezin-Toeplitz quantization is a highly non-trivial fact which goes back to the pioneering
work by Berezin \cite{Berezin} who succeeded to quantize certain symplectic homogeneous spaces such as
the complex projective space. At the moment existence of the Berezin-Toeplitz quantization is known
for all closed symplectic manifolds whose symplectic form $\omega$ represents an integral cohomology class.
This was established in \cite{BMS} (see also \cite{S} for a survey) for closed K\"{a}hler manifolds
by using pseudo-differential calculus of Toeplitz operators developed in the classical monograph \cite{BdMG}
by Boutet de Monvel and Guillemin, and in \cite{Gu,BU,MM} for general symplectic manifolds.

In the case of closed K\"{a}hler manifolds with the integral symplectic form, the Berezin-Toeplitz quantization admits the following simple description:
Given an integral K\"{a}hler manifold $(M^{2n},\omega,J)$,
choose a holomorphic Hermitian line bundle $(L,h)$ over $M$ so that the curvature of its (unique)
Hermitian connection compatible with the holomorphic structure equals $-i\omega$ (existence of $(L,h)$
is a well known fact from complex algebraic geometry, see e.g. \cite[Section 7.1.3]{Voisin}).
Let $V_m$ be the $L^2$-space of sections of $L^{\otimes m}$ equipped with the scalar product
$$\langle s_1,s_2 \rangle:= \int_M h^{\otimes m}(s_1,s_2) \omega^n\;.$$ Define $H_m \subset V_m$
as the space of all holomorphic sections of $L^{\otimes m}$. Write $\Pi$ for the orthogonal projection $V_m \to H_m$. For a function $f \in C^{\infty}(M)$ denote by $S_f:V_m \to V_m$ the multiplication operator
$s \mapsto fs$. With this notation the operator $T_m(f)$ is defined as $\Pi S_f \Pi^* : H_m \to H_m$.
Properties (BT1) and (BT2) are obvious, while (BT3),(BT4) and (BT5) require a delicate analysis.

Property (BT4) reflects {\it the quantum-classical correspondence principle}:
the Poisson bracket of a pair of classical observables corresponds to $2\pi i/\hbar$ times the
commutator of their quantum counterparts up to terms of the order $O(\hbar)$.

It readily follows from properties (BT3) and (BT4) of the Berezin-Toeplitz quantization
that the functional
$$\Phi: C^{\infty}(M) \times C^{\infty}(M) \to \R,\;\; \Phi(f,g):=||\{f,g\}||$$
defined in \eqref{eq-Phi-def} above can be seen in the quantum limit:
\begin{equation}\label{eq-pbquantbound}
||m[T_m(f),T_m(g)]\;||_{op} = ||T_m(\{f,g\})||_{op}+ O(1/m) = ||\{f,g\}|| + O(1/m)
\end{equation}
for all smooth functions $f,g$ on $M$.

     The Berezin-Toeplitz quantization can be described in the language of POVMs : There exists a sequence of $\cL(H_m)$-valued POVMs $G_m$ on the symplectic manifold $M$ equipped with the Borel $\sigma$-algebra so that
\begin{equation}\label{eq-POVM_Toeplitz}
T_m(f)=\int_M f\; dG_m\;.
\end{equation}
For the sake of completeness, let us sketch the argument following the proof of Proposition 1.4.8 of Chapter II in \cite{L} (see also a discussion in \cite{MT}). Indeed, for every vector $\xi \in H_m$ define a linear real-valued functional on $C^{\infty}(M)$ by
$f \mapsto \langle T_m(f)\xi,\xi \rangle$.
Since every positive linear functional on $C^{\infty}(M)$ is given by the integration against
a Borel measure, there exists a measure $\mu^{(m)}_\xi$ on $M$ with
$ \langle T_m(f)\xi,\xi \rangle = \int f \;d\mu^{(m)}_\xi$.
The POVM $G_m$ is defined by the equality $\langle G_m(X)\xi,\xi \rangle = \mu^{(m)}_\xi (X)$
for every Borel subset $X \subset M$ and vector $\xi \in H_m$. In some sense we shall study below the deviation of POVMs $G_m$ from projector valued measures. Let us mention also that in the case of K\"{a}hler manifolds it is not hard to express POVMs $G_m$ in terms of coherent states (see \cite{Berezin,L,S}).

The following elementary observation is crucial for our purposes: Let $f_1,...,f_N$ be a partition of unity on a closed symplectic manifold $(M,\omega)$. Then the Berezin-Toeplitz quantization takes it into an $\cL(H_m)$-valued POVM $\{T_m(f_j)\}$ on $\Omega_N$. Indeed, the operators $T_m(f_j)$ are Hermitian, positive
and their sum equals $\id$.

\section{The main theorem}\label{sec-mainth} Finally, we are ready to present our main result. Consider a closed symplectic manifold $(M,\omega)$ whose symplectic form represents an integral de Rham
cohomology class. Fix a scheme of the Berezin-Toeplitz quantization
$T_m: C^\infty(M) \to \cL(H_m)$. Let $U \subset M$ be an open displaceable subset
of a closed symplectic manifold $(M,\omega)$. Take any partition of unity  $\{f_j\}$, $j=1,...,N$
subordinated to a cover of $M$ by $N$ open subsets dominated by $U$. Consider the  $\cL(H_m)$-valued POVM
$A^{(m)} = \{T_m(f_j)\}$ on $\Omega_N$ obtained from  $\{f_j\}$ by the Berezin-Toeplitz quantization.
Take any positive $c < C/2$, where $C$ is the constant provided by Theorem \ref{thm-partunity}.

\begin{thm}\label{thm-main} The systematic noise of POVM $A^{(m)}$ satisfies
\begin{equation}
\label{eq-POVM-1}
\cN_s(A^{(m)}) \geq \frac{c}{mN^2}
\end{equation}
for all sufficiently large $m \in \N$.
\end{thm}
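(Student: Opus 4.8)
The plan is to chain together the three ingredients already established: the unsharpness principle (Theorem \ref{cor-uncert-sm}), the quantum--classical correspondence for the Poisson bracket \eqref{eq-pbquantbound}, and the rigidity of partitions of unity (Theorem \ref{thm-partunity}). First I would invoke Theorem \ref{cor-uncert-sm} to reduce the desired lower bound on $\cN_s(A^{(m)})$ to a lower bound on the magnitude of non-commutativity, namely $\cN_s(A^{(m)}) \geq \frac{1}{2}\nu_q(A^{(m)})$. The task thus becomes bounding $\nu_q(A^{(m)})$ from below in terms of the classical quantity $\nu_c(\{f_j\})$, which Theorem \ref{thm-partunity} controls.

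Next I would express $\nu_q(A^{(m)})$ in terms of the classical data. Since $T_m$ is $\R$-linear and $A^{(m)}_j = T_m(f_j)$, for every $x \in K_N$ we have $A^{(m)}(x) = \sum_j x_j T_m(f_j) = T_m(\sum_j x_j f_j)$. Writing $f_x := \sum_j x_j f_j$ and $g_y := \sum_k y_k f_k$, the commutator in the definition \eqref{eq-nuq} of $\nu_q$ is $[T_m(f_x), T_m(g_y)]$, to which \eqref{eq-pbquantbound} applies: $||\,m[T_m(f_x), T_m(g_y)]\,||_{op} = ||\{f_x, g_y\}|| + O(1/m)$. Recalling that by the definition \eqref{eq-nu-class} one has $\nu_c(\{f_j\}) = \max_{x,y \in K_N} ||\{f_x, g_y\}||$, this should yield $m\,\nu_q(A^{(m)}) \geq \nu_c(\{f_j\}) - R/m$ for a constant $R$, and then Theorem \ref{thm-partunity} gives $m\,\nu_q(A^{(m)}) \geq C/N^2 - R/m$.

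The main obstacle is the uniformity of the remainder $O(1/m)$ in \eqref{eq-pbquantbound} across the cube: the excerpt explicitly warns that these remainders depend on the norms of the derivatives of $f_x$ and $g_y$. I would resolve this by noting that $x, y$ range over the \emph{compact} cube $K_N$ and there are only finitely many fixed functions $f_1, \dots, f_N$, so the $C^\infty$-norms of $f_x$ and $g_y$ are bounded uniformly in $x, y$; hence $R$ can be chosen independent of $x, y$. This lets me pass the supremum through the estimate: choosing $(x^*, y^*)$ that attains $\nu_c(\{f_j\})$ gives $m\,\nu_q(A^{(m)}) \geq ||\{f_{x^*}, g_{y^*}\}|| - R/m = \nu_c(\{f_j\}) - R/m$, as claimed.

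Finally I would assemble the constants. Combining the above, $\cN_s(A^{(m)}) \geq \frac{1}{2}\nu_q(A^{(m)}) \geq \frac{1}{2m}(C/N^2 - R/m) = \frac{C}{2mN^2} - \frac{R}{2m^2}$. For a fixed $c < C/2$, the inequality $\frac{C}{2mN^2} - \frac{R}{2m^2} \geq \frac{c}{mN^2}$ is equivalent to $m \geq RN^2/(C - 2c)$, which holds for all sufficiently large $m$. This is exactly the asserted bound \eqref{eq-POVM-1}, completing the argument.
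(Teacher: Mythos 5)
Your proposal is correct and follows essentially the same route as the paper: the paper's proof is exactly the chain of Theorem \ref{thm-partunity}, formula \eqref{eq-pbquantbound} (giving $\nu_q(A^{(m)}) \geq 2c/(mN^2)$ for large $m$), and the unsharpness principle of Theorem \ref{cor-uncert-sm}. You merely supply details the paper leaves implicit --- the linearity identity $A^{(m)}(x)=T_m(f_x)$, the remainder bookkeeping, and the (in fact unnecessary once you evaluate only at the maximizer $(x^*,y^*)$) uniformity of $O(1/m)$ over the cube.
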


\medskip
\noindent Let us emphasize that the constant $c$ depends only on the symplectic manifold $(M,\omega)$ and the domain $U$, but not on the specific cover $\{U_j\}$ and the partition of unity $\{f_j\}$.
Inequality \eqref{eq-POVM-1} holds for all $m \geq m_0$ where the number $m_0$ does depend on $\{f_j\}$.

\medskip
\noindent
Inequality \eqref{eq-POVM-1} can be spelled out as follows:  if $B^{(m)}$ is any sequence of POVMs whose smearing gives rise to $A^{(m)}$, POVM $B^{(m)}$ is necessarily unsharp with $\cN(B^{(m)}) \geq c/(mN^2)$ for all sufficiently large $m$. In contrast to $A^{(m)}$, the sequence of POVMs $B^{(m)}$  in general does not admit any definite classical counterpart. Therefore the link between rigidity of partitions of unity and quantum unsharpness presented above goes slightly beyond the direct translation via the correspondence principle.

\medskip
\noindent
{\bf Proof of Theorem \ref{thm-main}:} Theorem \ref{thm-partunity} and formula \eqref{eq-pbquantbound}
imply that
\begin{equation}
\label{eq-POVM}
\nu_q(A^{(m)}) \geq \frac{2c}{mN^2}
\end{equation}
for all sufficiently large $m$. Thus the unsharpness principle provided by Theorem \ref{cor-uncert-sm}
yields \eqref{eq-POVM-1}.
\qed

\medskip
\noindent
\begin{exam} \label{exam-commutBT} {\rm It follows from the discussion after Theorem \ref{thm-partunity}
above that for certain symplectic manifolds (such as complex projective spaces) Theorem \ref{thm-main}
remains valid when the cover consists of displaceable subsets (not necessarily dominated by the same displaceable subset $U$), in which case the constant $c$ depends only on $(M,\omega)$.
The assumption that the elements of the cover are displaceable cannot be lifted. Indeed, consider the cover of the sphere $S^2$ presented in Example \ref{exam-sphere} above
together with the subordinated partition of unity $\{f_j(q_3)\}$. It turns out that the corresponding Toeplitz operators $T_m(f_j)$ pair-wise commute. To check this claim, one can use the explicit model of the Berezin-Toeplitz quantization used by Berezin
in \cite{Berezin}. In this model the Hilbert space $H_m$ consists of complex polynomials
$p(z)$ of degree $\leq m$ and the function $q_3$ corresponds to the function $$u(z) = \frac{1-|z|^2}{1+|z|^2}\;, z \in \C= S^2\setminus \{\infty\}\;.$$ A direct calculation with Berezin's coherent states shows that the level $m$ Toeplitz operator
corresponding to any function of the form $f(u(z))$ has the eigenbasis $1,z,...,z^m$, which yields the claim.
Since the POVM $A^{(m)}=\{T_m(f_j)\}$ is commutative, the systematic noise $\cN_s(A^{(m)})$ vanishes for all $m$. There is no contradiction with Theorem \ref{thm-main} since one of the subsets of our cover
contains the equator of the sphere and hence is not displaceable.

Further, assume that this partition of unity
is non-trivial, that is one of the functions, say, $f_1$ attains a value $a$ which lies strictly between $0$ and $1$. It turns out that the magnitude of noise $\cN(A^{(m)})$ of $A^{(m)}$ (as opposed to the systematic
noise) remains bounded away from $0$ in the classical limit $m \to \infty$. In fact
we claim that given any positive constant  $\alpha < a-a^2$,
$$\cN(A^{(m)}) \geq \alpha$$ for all sufficiently large $m$. Indeed, take a vector $x_* = (1,0,...,0)$ in the cube $K_N=[-1,1]^N$. Then
$$\Delta_{A^{(m)}}(x_*) = T_m(f_1)-T_m(f_1)^2 \;.$$
It follows from property (BT5) of the Berezin-Toeplitz quantization that
for all sufficiently large $m$
$$||T_m(f_1)-T_m(f_1)^2||_{op} = ||f_1 -f_1^2|| + O(1/m) \geq \alpha\;, $$
and hence $\cN(A^{(m)}) \geq \alpha$ which proves the claim.
}\end{exam}

\section{Interpretation: what has been quantized?}\label{sec-interpret} Let $\{U_1,...,U_N\}$ be a cover of a set $M$. Consider the following process: Each point $z \in M$ has to register in exactly one set $U_j$  containing it. In other words, one has
to assign to each $z \in M$ an index $j \in \Omega_N :=\{1,...,N\}$ so that $z \in U_j$.
This task is in general ambiguous because of the overlaps between the subsets of the cover.
In order to resolve the ambiguity, the assignment is made at random: one chooses an index $j \in \Omega_N$ corresponding to a point $z \in M$ with probability $f_j(z)$, where
\begin{equation}\label{eq-prob}
f_j(z) = 0 \;\;\; \text{provided}\;\;\; z \notin U_j\;.
 \end{equation}
The latter condition guarantees a correct answer to the question {\it `Where (i.e. in which set $U_j$) is a given point $z$ located?'} Each specific outcome of the registration provides ``truth, but not the whole truth".

As an illustration \footnote{Thanks to Iosif Polterovich for this idea.}, consider the following toy model of a cellular communication network consisting of a collection of access points $u_1,...,u_N$. Each access point $u_j$ can be reached from a domain $U_j$, the so called {\it location area}. The location areas $U_j$ cover some territory $M$ considered, for simplicity, as a plane domain. Each cell phone at a given location $z \in M$  must register in exactly one access point $u_j$ whose location area $U_j$ contains $z$. When $z$ lies on the overlap between several
location areas, the choice between them is made at random with a probability depending on $z$ (in reality, of course, the choice involves more delicate considerations, see \cite{overlap}).

Suppose now that $M$ is a topological space, the subsets $U_j$ are open and the probabilities $f_j(z)$ are continuous functions. Let us slightly strengthen condition \eqref{eq-prob} above by assuming that
that $f_j$ vanishes in {\it a neighborhood} of the closed set $M \setminus U_j$. This means that $f_j$ is
supported in $U_j$. Since the probabilities $f_j(z)$, $j=1,...,N$ are non-negative and sum up to $1$ for every $z$, the functions $f_j$ form a partition of unity subordinated to the cover $\{U_j\}$. Given a  probability measure $\sigma$ on $M$, a randomly chosen (with respect to $\sigma$) point of $M$ is registered in the set $U_j$ with
probability
\begin{equation}\label{eq-prob-1}
\int_M f_j(z) \; d\sigma \;.
\end{equation}

Next, let us discuss this registration procedure in the context of {\it the Hilbert space model of classical mechanics} \cite[p.1628]{Misra}. Let $M$ be a compact classical phase space whose phase volume is given by a probability measure $\mu$. Consider the Hilbert space $H=L_2(M,\mu)$ of square-integrable complex valued functions on $M$. The states of the system are represented by unit vectors $\xi \in H$. Each such vector gives rise to
 a probability measure $d\sigma_\xi = |\xi|^2 \cdot d\mu$ on $M$. Take any partition of unity
 $\{f_1,...,f_N\}$  subordinated to an open cover  $U_1,...,U_N$ of $M$ and consider the registration
 procedure associated to this data. According to formula \eqref{eq-prob-1}, if the system is prepared in
 the state $\xi$, it is registered in a domain $U_j$ with probability
 \begin{equation}\label{eq-prob-2}
\int_M f_j(z) \; d\sigma_\xi \;.
\end{equation}
This statistical procedure can be described by an $\cL(H)$-valued POVM $A=\{A_1,...,A_N\}$
on $\Omega_N$, where each $A_j$ is the operator of multiplication
by $f_j$. Treating $A$ as an observable, we see that at a state $\xi \in H$ it accepts value $j \in \Omega_N$ with the probability $$\langle A_j \xi, \xi \rangle = \int_M f_j |\xi|^2 \; d\mu = \int_M f_j d\sigma_{\xi}\;,$$
which agrees with the rule \eqref{eq-prob-2} above.

Clearly, the POVM $A$ is commutative, but not sharp. For instance, if at least one of the sets, say, $U_1$
is not completely contained in the union of the others and does not coincide with $M$, there exist points
$z \in U_1$ and $w \in M \setminus U_1$ with $f_1(z) = 1$ and $f_1(w)=0$. Put $x_* = (1,0,...,0) \in \R^N$ and observe that the noise operator
$\Delta_A (x_*)$ acts on $H$ by multiplication on $f_1-f_1^2$. Since $f_1$ necessarily takes the value $1/2$ we conclude
that the noise $\cN(A)$ satisfies
$$\cN(A) \geq ||\Delta_A (x_*)||_{op} \geq 1/4\;.$$

Since $A$ is commutative, it is a smearing of a sharp observable. The latter can be chosen
as the following canonical $\cL(H)$-valued projector valued measure $P$ on the phase space $M$. Given
a Borel subset $X \subset M$, the projector $P(X)$ is the operator of multiplication by the indicator
function $\chi_X$ of $X$. The equation
$$A_j = \int_M f_j(z) \; dP(z)$$ shows that $A$
is the smearing of $P$ with the Markov kernel given
by the partition of unity $\{f_j\}$ (see \eqref{eq-smearing-int} above). Thus the systematic noise $\cN_s(A)$  vanishes.

Finally, let us describe the quantum version of our registration procedure: Let $(M^{2n},\omega)$ be a closed
symplectic manifold equipped with the phase volume $\omega^n$, and let
$T_m : C^\infty(M) \to \cL(H_m)$ be a Berezin-Toeplitz quantization. Consider the registration
procedure associated to an open cover $\{U_1,...,U_N\}$ of $M$ and a subordinated partition
of unity $\{f_1,...,f_N\}$.  The {\it quantum registration} is described by
the $\cL(H_m)$-valued POVM $A^{(m)}$ formed by the Toeplitz operators $T_m(f_j)$, $j=1,...,N$.
In a pure state $\xi \in H_m$, $|\xi|=1$ the quantum system is registered in a subset
$U_j$ with probability $\langle T_m(f_j)\xi,\xi \rangle$. With this language our main result
 given in Theorem \ref{thm-main} above states that if the cover is dominated by a displaceable subset $U \subset M$, the systematic noise of $A^{(m)}$ is bounded away from $0$ for all sufficiently large $m$.

 The registration procedure presented above can be considered as an attempt to localize the mechanical system in the phase space. We conclude that {\it any phase space localization
 of the quantized system  beyond certain scale yields a systematic noise.} Interestingly enough,
 this scale is governed by the notion of displaceability provided by symplectic topology.

\section{Discussion and further directions}\label{sec-disc}

\medskip
\noindent
{\sc From functions to subsets:} It is currently unclear whether a statement similar to Theorem \ref{thm-main} holds for {\it partitions} of a symplectic manifold into sufficiently small sets
(say, dominated by the same open displaceable subset). More precisely,
let $M = \bigsqcup_{j=1}^N X_j$ be such a partition,
and let $G_m$ be the POVM associated by \eqref{eq-POVM_Toeplitz} to the Berezin-Toeplitz quantization.
{\it Is it true that POVMs $\{G_m(X_j)\}_{j=1,...,N}$ on $\Omega_N$ are necessarily non-commutative for large $m$}? The answer is unknown to me even for the case when $M$ is the two-dimensional sphere. The difficulty here is
due to the fact that the properties of the Toeplitz operators $T_m(f)$ are much less understood for non-smooth
functions $f$, in particular for the indicator functions of subsets. We refer to \cite{Palamodov} for
a discussion (in a somewhat different setting) on spectral properties of Toeplitz operators associated to subsets of the standard symplectic plane.

\medskip
\noindent
{\sc Unsharpness principle:}
What is the precise value of the numerical
constant in the unsharpness principle formulated in Theorem \ref{cor-uncert-sm}?
It may well happen that this constant depends on the dimension of the Hilbert space $H$.

\medskip
\noindent {\sc Robustness of the systematic noise:} Given a POVM $A$, is its systematic noise $\cN_s(A)$ robust
with respect to small perturbations of $A$ in some natural metric? The difficulty here is that the systematic
noise is defined through all ``unsmearings" of $A$, while the process of smearing, being a diffusion, is in general not reversible. Thus there is no obvious relation between
 unsmearings of a POVM $A$ and of its small perturbations. On the contrary, the magnitude of non-commutativity
 $\nu_q(A)$ seems to be robust. If so, the unsharpness principle formulated in Theorem \ref{cor-uncert-sm}
 provides a robust lower bound for $\cN_s(A)$. It would be interesting to explore this in further details.

\medskip
\noindent
{\sc From quantum to classical and back:} Incidentally, the notion of a symplectic quasi-state, whose version is used in the proof of rigidity of partitions of unity in Section \ref{sec-rigpu} above, is closely related to the Gleason theorem \cite{Gleason}. The latter serves as an important argument against existence of non-contextual hidden variables in quantum mechanics. Roughly speaking, Gleason's theorem prohibits non-linear quantum quasi-states,
however their classical counterparts (defined via the correspondence principle) emerge in the large quantum number limit and can be detected by means of Floer theory. We refer to \cite{EPZ-physics} for a detailed exposition. From this perspective, the discussion of the present note starts and ends in quantum mechanics,
with a detour to the classical one.

\medskip
\noindent
{\sc Unsharpness of joint measurements:} In \cite{P-joint} we,
following a suggestion by Paul Busch, extend the results of the present paper to the case
of joint quantum measurements. We start with a pair of partitions of unity subordinated to finite open covers of $M$, and look at their images under the Berezin-Toeplitz quantization.
The corresponding POVMs are necessarily jointly measurable, that is arise as marginals of a joint observable. It turns out that in certain situations the relative geometry of the covers guarantees that for sufficiently large quantum numbers this joint observable is necessarily unsharp, that is possesses a systematic quantum noise.

\medskip
\noindent
{\sc Quantum dynamics:}
In the present note we quantized functions and the norm of their Poisson bracket.
In fact, one can quantize  Hamiltonian flows: Under the Berezin-Toeplitz quantization, the
Hamiltonian evolution on the symplectic manifold
corresponds to the Schr\"{o}dinger evolution on spaces $H_m$ up to an error of the order $O(1/m)$
as $m \to \infty$. One can give an elementary proof of this statement following the argument
of \cite[Proposition 2.7.1]{L} (cf. \cite[Remark (1), p.291]{BMS} and \cite[Corollary 8.3]{Charles}).
Another facet of the same phenomenon is that up to an error which tends to zero in the classical limit
``the coherent states move along the laws of classical mechanics", \cite[Remark (2), p.294]{BMS}. Therefore, it should be possible to translate into the quantum language
certain ``rigid" results on symplectic intersections, such as non-displaceability of a given closed subset.
This may enable one to detect meaningful footprints of symplectic rigidity in quantum dynamics.

\medskip

Finally, it would be interesting to explore other links between ``hard" symplectic topology and quantum mechanics. A step in this direction has been made is a series of papers by M.~de Gosson, see
\cite{deG} for a survey of intriguing interrelations between symplectic capacities and the uncertainty principle.

\bigskip

{\bf Acknowledgements:} I thank Tatyana Barron, Strom Borman, Paul Busch, Maurice de Gosson, Michael Entov, Ezra Getzler, Victor Guillemin, Yael Karshon, Yaron Ostrover, Iosif Polterovich and Frol Zapolsky for stimulating discussions and/or critical remarks on earlier drafts of the paper. Michael Entov and Victor Guillemin brought my attention to references \cite{MM} and \cite{Gu} respectively.
I am grateful to Paul Busch for illuminating comments on the noise operator. I thank the referees for numerous
valuable suggestions.

\begin{tabular}{l}
Leonid Polterovich\\
Department of Mathematics\\
University of Chicago\\
Chicago, IL 60637, USA\\
and\\
School of Mathematical Sciences\\
Tel Aviv University\\
Tel Aviv 69978, Israel\\
polterov@runbox.com\\
\end{tabular}

\end{document}